\documentclass[12pt]{amsart}

\usepackage{fullpage}
\usepackage{latexsym,amssymb,amsfonts,amsmath,mathrsfs,float}
\usepackage[font=small,labelfont=bf, width=.618\textwidth]{caption} 
\usepackage{xcolor}
\usepackage{tikz, graphics, enumerate}
\usepackage{hyperref}


  
  \renewcommand{\Pr}{\mbox{\rm Pr}}	
  
  \newcommand{\E}{\mathbb{E}}

  \newcommand{\R}{\mathbb{R}} 

  \newcommand{\st}{:\,} 

  \newcommand{\eps}{\varepsilon}

  \DeclareMathOperator{\diag}{diag}

  \newcommand{\beq}{\begin{equation}}
  \newcommand{\eeq}{\end{equation}}
  \newcommand{\beqn}{\begin{equation*}}
  \newcommand{\eeqn}{\end{equation*}}
  \newcommand{\beqr}{\begin{eqnarray}}
  \newcommand{\eeqr}{\end{eqnarray}}
  \newcommand{\beqrn}{\begin{eqnarray*}}
  \newcommand{\eeqrn}{\end{eqnarray*}}
  \newcommand{\bmline}{\begin{multline}}
  \newcommand{\emline}{\end{multline}}
  \newcommand{\bmlinen}{\begin{multline*}}
  \newcommand{\emlinen}{\end{multline*}}

  \theoremstyle{plain}
  \newtheorem{theorem}{Theorem}[section]
  \newtheorem{lemma}[theorem]{Lemma}
  
  \newtheorem{claim}[theorem]{Claim}
  \newtheorem{corollary}[theorem]{Corollary}
  
  \theoremstyle{definition}
  \newtheorem{definition}[theorem]{Definition}

  \theoremstyle{remark}
  
  \renewenvironment{proof}[1][]{
    	\begin{trivlist}
     	\item[\hspace{\labelsep}{\em\noindent Proof#1:\/}]}
     	{{\hfill$\Box$}
    	\end{trivlist}
  }
  
  \makeatletter
  \newtheorem*{rep@theorem}{\rep@title}
  \newcommand{\newreptheorem}[2]{%
  \newenvironment{rep#1}[1]{%
  \def\rep@title{#2 \ref{##1}}%
  \begin{rep@theorem}}%
  {\end{rep@theorem}}}
  \makeatother

  \newreptheorem{lemma}{Lemma}
  \newreptheorem{theorem}{Theorem}
  \newreptheorem{corollary}{Corollary}
  \newreptheorem{proposition}{Proposition}
  \newreptheorem{conjecture}{Conjecture}
  \newreptheorem{example}{Example}


\begin{document}
\title{A Hoeffding inequality for Markov chains}
\author{
       Shravas Rao}
\address{Courant Institute, New York University, 251 Mercer Street, New York NY 10012, USA}
\email{rao@cims.nyu.edu}
\thanks{This material is based upon work supported by the National Science Foundation Graduate Research Fellowship Program under Grant No. DGE-1342536.}
\date{\today}

\maketitle
\begin{abstract}
We prove deviation bounds for the random variable $\sum_{i=1}^{n} f_i(Y_i)$ in which~$\{Y_i\}_{i=1}^{\infty}$ is a 
Markov chain with stationary distribution and state space $[N]$, and $f_i: [N] \rightarrow [-a_i, a_i]$.
Our bound improves upon previously known bounds in that the dependence is on $\sqrt{a_1^2+\cdots+a_n^2}$ rather than $\max_{i}\{a_i\}\sqrt{n}.$
We also prove deviation bounds for certain types of sums of vector--valued random variables obtained from a Markov chain in a similar manner.
One application includes bounding the expected value of the Schatten~$\infty$-norm of a random matrix whose entries are obtained from a Markov chain.
\end{abstract}

\section{Introduction}

Consider a 
Markov chain $\{Y_i\}_{i=1}^{\infty}$ with state space $[N]$, transition matrix $A$, and stationary distribution $\pi$ such that $Y_1$ is distributed as $\pi$.
Let $E_{\pi}$ be the associated averaging operator defined by $(E_{\pi})_{ij} = \pi_j$, so that for $v \in \R^N$ $E_{\pi}v = \E_{\pi}[v]\mathbf{1}$ where $\mathbf{1}$ is the vector whose entries are all $1$.

In the case that the $Y_i$ are independent, that is $A = E_{\pi}$, then it is well known (see~\cite{H63}) that for functions $f_1, \ldots, f_n: [N] \rightarrow [-1, 1]$ with $\E[f_i(Y_i)] = 0$ for all $i$ and $u \geq 0$, that
\begin{equation}\label{eq:grandpa}
\Pr[|f_1(Y_1)+\cdots+f_n(Y_n)| \geq u\sqrt{n}] \leq 2\exp\left(-u^2/2\right).
\end{equation}
Gillman generalized Eq.~\eqref{eq:grandpa} to all 
Markov chains with a stationary distribution, in terms of the quantity $\lambda = \|A-E_{\pi}\|_{L_2(\pi) \rightarrow L_2(\pi)}$ in the case $f_1 = \cdots = f_n$~\cite{G98}.
These bounds were refined in a long series of work including~\cite{D95, K97, L98, Wagner08, LCP04, H08, ChungLLM12, HazlaH15, P15, NaorRR17, RR17}.
We state the following version due to Healy~\cite{H08}, which handles the case in which the $f_i$ are not necessarily equal.
\begin{equation}\label{eq:h08}
\Pr[|f_1(Y_1)+\cdots+f_n(Y_n)| \geq u\sqrt{n}] \leq 2\exp\left(\frac{-u^2(1-\lambda)}{4}\right).
\end{equation}

Back in the case of independent random variables, Hoeffding generalized Eq.~\eqref{eq:grandpa} to the case when the function $f_i$ has range $[-a_i, a_i]$, obtaining the following bound~\cite{H63}.
\begin{equation}\label{eq:hoeffding}
\Pr\left[|f_1(Y_1)+\cdots+f_n(Y_n)|  \geq u\left(\sum_{i=1}^n a_i^2\right)^{1/2} \right]
\leq
2\exp(-u^2/2).
\end{equation}
In this work, we generalize Eq.~\eqref{eq:hoeffding} to 
Markov chains with a stationary distribution.
In particular, we prove the following.

\begin{theorem}\label{cor:tailbound} 
Let $\{Y_i\}_{i=1}^{\infty}$ be a stationary 
Markov chain with state space $[N]$, transition matrix $A$, stationary probability measure $\pi$, and averaging operator $E_{\pi}$, so that $Y_1$ is distributed according to $\pi$.
Let $\lambda = \|A-E_{\pi}\|_{L_2(\pi) \rightarrow L_2(\pi)}$ and let $f_1, \ldots, f_n: [N] \rightarrow \R$ so that $\E[f_i(Y_i)] = 0$ for all $i$ and $|f_i(v)| \leq a_i$ for all $v \in [N]$ and all $i$.
Then for $u \geq 0$,
\[
\Pr\left[|f_1(Y_1)+\cdots+f_n(Y_n)| \geq u\left(\sum_{i=1}^n a_i^2\right)^{1/2} \right]
\leq
2\exp(-u^2(1-\lambda)/(64e)).
\]
\end{theorem}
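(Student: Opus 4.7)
The natural plan is a moment method: bound the even moments $\E[S^{2k}]$ where $S=\sum_{i=1}^n f_i(Y_i)$, then apply Markov's inequality $\Pr[|S|\geq t]\leq \E[S^{2k}]/t^{2k}$ and optimize over $k$. The target estimate is
\[
\E[S^{2k}] \;\leq\; \left(\frac{Ck}{1-\lambda}\right)^k \Bigl(\sum_{i=1}^n a_i^2\Bigr)^k
\]
for a universal constant $C$; setting $t=u\bigl(\sum_i a_i^2\bigr)^{1/2}$ and optimizing at $k\approx u^2(1-\lambda)/(eC)$ then yields a sub-Gaussian tail $2\exp(-u^2(1-\lambda)/(eC))$, which is exactly the shape of the stated bound with a suitable $C$ giving the constant $1/(64e)$.

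To bound the $2k$-th moment, expand
\[
\E[S^{2k}] \;=\; \sum_{i_1,\dots,i_{2k}\in[n]} \E\bigl[f_{i_1}(Y_{i_1})\cdots f_{i_{2k}}(Y_{i_{2k}})\bigr],
\]
and, after permuting within each tuple so that the indices are nondecreasing $j_1\leq\cdots\leq j_{2k}$, rewrite each joint expectation as a bilinear form
\[
\pi^{T} D_{j_1} A^{j_2-j_1} D_{j_2} A^{j_3-j_2}\cdots D_{j_{2k-1}} A^{j_{2k}-j_{2k-1}} D_{j_{2k}} \mathbf{1},
\]
where $D_j$ is the diagonal matrix with diagonal $(f_j(1),\dots,f_j(N))$. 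Using the decomposition $A=E_\pi+B$ with $\|B\|_{L_2(\pi)\to L_2(\pi)}=\lambda$, expand each $A^{j_{\ell+1}-j_\ell}$ into patterns of $E_\pi$'s and $B$'s. The mean-zero hypotheses give $\pi^{T}D_j\mathbf{1}=0$, and since $E_\pi=\mathbf{1}\pi^{T}$ is rank one, any pattern containing an $E_\pi$ that is not immediately flanked on both sides by a $B$ factors through a $\pi^{T}D_j\mathbf{1}=0$ and vanishes. What remains is a sum indexed by patterns in which each $E_\pi$ is sandwiched by $B$'s; each $B$ contributes a factor $\lambda$ in norm, and summing the geometric series over the length of each maximal run of $B$'s between two $D$'s yields the $(1-\lambda)^{-1}$ factor per gap, giving the $(1-\lambda)^{-k}$ in the target bound.

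The main combinatorial step is to re-sum the remaining terms over index tuples in a way that produces $(\sum_i a_i^2)^k$ rather than the crude $(\max_i a_i)^{2k}n^k$. The idea is to organize tuples by a pairing of $\{1,\dots,2k\}$: when two positions in the tuple are paired one writes $|f_{j_\ell}\cdot f_{j_{\ell'}}|\leq \tfrac{1}{2}(f_{j_\ell}^2+f_{j_{\ell'}}^2)$ (or applies Cauchy--Schwarz on adjacent $D_j$'s via the $L_2(\pi)$ inner product) so that each paired position contributes $a_j^2$; summing unconstrained over the index of each pair gives exactly $\sum_i a_i^2$, raised to the $k$-th power over the $k$ pairs. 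The number of pairings is $(2k-1)!!\leq (2k)^k$, producing the $k^k$ growth.

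The hard part will be executing this bookkeeping cleanly: identifying the correct notion of "surviving pattern" after the $E_\pi$-cancellations, bounding the operator-theoretic bilinear form sharply enough to replace $L_\infty$ bounds on $f_i$ by pairwise $L_2(\pi)$ bounds (so that $a_j^2$ appears per pair rather than $a_j$ per factor), and verifying that the resulting constants combine to give the claimed $1/(64e)$. Once the moment bound is established, the final deviation inequality is a mechanical application of Markov's inequality, optimization of $k$, and a union bound on the two tails.
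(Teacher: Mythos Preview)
Your overall architecture matches the paper's: bound even moments by expanding into monomials, express each monomial as $\pi^T D_{j_1}A^{j_2-j_1}\cdots D_{j_{2k}}\mathbf 1$, split each block $A^{\text{gap}}=E_\pi+(A-E_\pi)^{\text{gap}}$, use $\E[f_j(Y_j)]=0$ to kill terms with two consecutive $E_\pi$'s or an $E_\pi$ at the boundary, then sum. The paper finishes with the MGF rather than directly optimizing Markov over $k$, but that is immaterial.

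The gap is in the step you flag as ``the hard part.'' After the pattern analysis, what survives is a sum of the form
\[
\sum_{w\in[n]^q} a_{w_1}\cdots a_{w_q}\sum_{s\in S_{q-1}}\prod_{i:s_i=1}\lambda^{\sigma(w)_{i+1}-\sigma(w)_i},
\]
and the whole point of the theorem is to extract $(\sum_i a_i^2)^{q/2}$ here rather than $(\max_i a_i)^{q}\,n^{q/2}$. Your pairing/AM--GM sketch does not do this: once you replace $a_{w_p}a_{w_{p'}}$ by $\tfrac12(a_{w_p}^2+a_{w_{p'}}^2)$, the index of the ``un-squared'' partner still appears in the $\lambda$-factor through $\sigma(w)$, so you cannot sum it ``unconstrained'' to get a free $\sum_i a_i^2$; if you instead bound that sum by the geometric series you are back to $(\max_i a_i)^{q/2}$ on half the factors. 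The paper's device is different and is the key idea you are missing: for each $s$ and each choice of which $q/2$ coordinates of $w$ will occupy the \emph{odd} positions of $\sigma(w)$, it packages the entire sum as a bilinear form $\langle a^{\otimes q/2},\,T_{\mathcal I,s}\,a^{\otimes q/2}\rangle_{\ell_2}$, where $T_{\mathcal I,s}$ is an $n^{q/2}\times n^{q/2}$ nonnegative matrix. Then $\|T_{\mathcal I,s}\|_{\ell_2\to\ell_2}$ is bounded via the interpolation $\|T\|_{\ell_2\to\ell_2}^2\le\|T\|_{\ell_1\to\ell_1}\|T\|_{\ell_\infty\to\ell_\infty}$; each of the latter is a maximal row/column sum, computed by \emph{fixing} the odd sorted positions and summing the geometric series over the even sorted positions to get $(q/2)!\,(1-\lambda)^{-q/2}$. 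This is exactly what converts the $a$-dependence into $\|a\|_{\ell_2}^q$ while retaining $(1-\lambda)^{-q/2}$. Until you have an argument at this level (operator norm of a suitable transfer matrix, or an equivalent rearrangement), the proposal does not close.
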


One interpretation of Theorem~\ref{cor:tailbound} is that for a Markov chain $\{Y_i\}_{i=1}^{\infty}$ and functions $f_1, \ldots, f_n: [N] \rightarrow [-1, 1]$, the random vector $(f_1(Y_1), \ldots, f_n(Y_n))$ is sub--gaussian.

We remark that the dependence on $\lambda$ in both Eq.~\eqref{eq:h08} and Theorem~\ref{cor:tailbound} is optimal, as shown in~\cite{LCP04} which considered the case that the $f_i$ are equal.
In particular, one can consider the Markov chain on two states with the transition matrix
\[
\begin{bmatrix}
\frac{1+\lambda}{2} & \frac{1-\lambda}{2} \\
\frac{1-\lambda}{2} & \frac{1+\lambda}{2}
\end{bmatrix}
\]
so that $f_i(1) = 1$ and $f_i(2) = -1$ for all $i$.
Intuitively, the random variable $f_1(Y_1)+\cdots+f_n(Y_n)$ is similar to the sum of $n(1-\lambda)$ random variables that are close to $1/(1-\lambda)$ or close to $-1/(1-\lambda)$, both with equal probability.

We also remark that Theorem~\ref{cor:tailbound} holds even for non-reversible Markov chains, continuing the work of~\cite{ChungLLM12} who were the first to consider this setting.
It is possible, if the Markov chain is not reversible, for $\|A-E_{\pi}\|_{L_2(\pi) \rightarrow L_2(\pi)}$ to be greater than $1$, and thus the bound in Theorem~\ref{cor:tailbound} is trivial.

\subsection{Extension to vector--valued random variables}

Recently, much attention has been paid to tail bounds for sums of vector--valued random variables.
Naor~\cite{N12} obtained tail bounds for sums of random variables from a Banach space satisfying certain properties.
Before stating the corresponding theorem, we define a quantity called the modulus of uniform smoothness.

\begin{definition}
The modulus of uniform smoothness of a Banach space $(X, \| \cdot \|)$ is \[\rho_X(\tau) = \sup\left\{\frac{\|x+\tau y\|+\|x - \tau y\|}{2}-1 : x, y \in X, \|x\| = \|y\| = 1\right\}.\]
\end{definition}

Let $(X, \|\cdot\|)$ be a Banach space so that $\rho_X(\tau) \leq s \tau^2$ for some $s$ and all $\tau > 0$.
When the elements of the Markov chain are independent, for $f_i: [N] \rightarrow \{x \in X \st \|x\| \leq a_i\}$ and such that $\E[f_i(Y_i)] = 0$, it was shown that
\begin{equation}\label{thm:banachspacech}
\Pr\left[\|f_1(Y_1)+\cdots+f_n(Y_n)\| \geq u \left(\sum_{i=1}^n a_i^2\right)^{1/2}\right] \leq \exp\left(s+2-{cu^2}\right)
\end{equation}
for some universal constant $c$.


We extend Theorem~\ref{cor:tailbound} to random variables from a fixed Banach space as follows.
We stress that the setting in the following theorem is more limited than that of Eq.~\eqref{thm:banachspacech}.
In particular we only allow random variables of the form $f(Y_i)X_i$ in which $f(Y_i)$ is a random scalar and $X_i$ is a fixed element from the Banach space.

\begin{theorem}\label{thm:app}
Let $(X, \|\cdot\|)$ be a Banach space, and let $X_1, \ldots, X_n \in X$.
Let $\{Y_i\}_{i=1}^{\infty}$ be a stationary 
Markov chain with state space $[N]$, transition matrix $A$, stationary probability measure $\pi$, and averaging operator $E_{\pi}$, so that $Y_1$ is distributed according to $\pi$.
Let $\lambda = \|A-E_{\pi}\|_{L_2(\pi) \rightarrow L_2(\pi)}$, and let $f_1, \ldots, f_n: [N] \rightarrow [-1, 1]$ be such that $\E[f_i(Y_i)] = 0$ for all $i$.
Then there exist universal constants $C$ and $L$, such that for any $u \geq 0$,
\[
\Pr\left[\left\|f_1(Y_1)X_1+\cdots+f_n(Y_n)X_n\right\| \geq 
{uC\E[\|g_1X_1 + \cdots + g_n X_n\|]}\right]
\leq
L\exp(-Cu^2(1-\lambda))
\]
where $g_1, \ldots, g_n \sim \mathcal{N}(0, 1)$ are independent standard Gaussian random variables.
\end{theorem}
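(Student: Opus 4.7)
The plan is to combine Theorem~\ref{cor:tailbound} with Talagrand's generic chaining machinery. I would view $S := \sum_{i=1}^n f_i(Y_i)X_i$ through the centered process $\varphi\mapsto\varphi(S)$ indexed by the dual unit ball $K = \{\varphi\in X^* \st \|\varphi\|_*\le 1\}$, and compare it to the Gaussian process $\varphi\mapsto\varphi(G)$, where $G = g_1 X_1 + \cdots + g_n X_n$. By Hahn--Banach, $\|S\|=\sup_{\varphi\in K}\varphi(S)$ and $\E[\|G\|]=\E[\sup_{\varphi\in K}\varphi(G)]$, so the theorem reduces to a process-sup comparison.

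First I would establish a process-level sub-Gaussian estimate. Applying Theorem~\ref{cor:tailbound} to the scalar Markov sum $\sum_i f_i(Y_i)(\varphi-\psi)(X_i)$ with $a_i = |(\varphi-\psi)(X_i)|$ shows that each increment $\varphi(S)-\psi(S)$ is sub-Gaussian with variance proxy $O(\sum_i ((\varphi-\psi)(X_i))^2/(1-\lambda))$. Hence $(\varphi(S))_{\varphi\in K}$ is a sub-Gaussian process with respect to the pseudometric $d_S(\varphi,\psi) = C(1-\lambda)^{-1/2}\,d_G(\varphi,\psi)$, where $d_G(\varphi,\psi)=\sqrt{\sum_i ((\varphi-\psi)(X_i))^2}$ is the canonical $L^2$ pseudometric of the Gaussian process $(\varphi(G))_{\varphi\in K}$.

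Next I would invoke Talagrand's generic chaining tail bound for sub-Gaussian processes, which asserts
\[
\Pr\!\left[\sup_{\varphi\in K}\varphi(S)\ge c_1\bigl(\gamma_2(K,d_S)+v\,\Delta(K,d_S)\bigr)\right]\le L\,e^{-v^2}
\]
for every $v\ge 0$, where $\gamma_2$ is Talagrand's $\gamma_2$-functional and $\Delta$ is the diameter. Both $\gamma_2(K,d_S)$ and $\Delta(K,d_S)$ acquire a factor $(1-\lambda)^{-1/2}$ relative to the Gaussian metric $d_G$. To express the Gaussian quantities in terms of $\E[\|G\|]$, I would use Talagrand's majorizing measures theorem, $\gamma_2(K,d_G)\le c_2\,\E[\sup_{\varphi\in K}\varphi(G)] = c_2\,\E[\|G\|]$, together with the Kahane--Khintchine inequality for Banach-space Gaussian sums, $\Delta(K,d_G)\le 2\sup_\varphi\|\varphi(G)\|_{L^2}\le 2\|G\|_{L^2}\le c_3\,\E[\|G\|]$.

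Putting the pieces together gives $\Pr[\|S\|\ge C\,\E[\|G\|](1+v)/\sqrt{1-\lambda}] \le L\,e^{-v^2}$, and the substitution $u=(1+v)/\sqrt{1-\lambda}$ delivers the claimed bound (the statement is vacuous, hence absorbed into $L$, for $u$ so small that the substitution would force $v<0$). The main obstacle is the chaining step: I need the tail form of generic chaining rather than only its expectation form, and I must control the metric comparison $d_S\le C(1-\lambda)^{-1/2}d_G$ uniformly along the chaining tree. The Kahane--Khintchine and majorizing-measures inputs are standard in the Banach-space literature, so the only essentially new ingredient beyond Theorem~\ref{cor:tailbound} is that its scalar sub-Gaussian estimate applies simultaneously to every dual functional $\varphi$.
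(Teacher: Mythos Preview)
Your proposal is correct and follows essentially the same route as the paper: represent $\|S\|$ via the dual ball, use Theorem~\ref{cor:tailbound} to verify the increment condition with the scaled Euclidean metric $d_S=\sqrt{32e/(1-\lambda)}\,d_G$, apply the generic chaining tail bound (Theorem~\ref{thm:taltail}), and convert $\gamma_2$ back to $\E\|G\|$ via the majorizing measures theorem. The only cosmetic difference is that you explicitly bound the diameter term via Kahane--Khintchine, whereas the paper absorbs it implicitly; in fact the simpler scalar identity $\|\varphi(G)\|_{L^2}=\sqrt{\pi/2}\,\E|\varphi(G)|\le\sqrt{\pi/2}\,\E\|G\|$ already suffices there.
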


Note that Eq.~\eqref{thm:banachspacech} implies that $\E[\|g_1X_1 + \cdots + g_n X_n\|] \leq C\sqrt{s (\|X_1\|^2+\cdots+\|X_n\|^2)}$ for some constant $C$.
This follows from the fact that the distribution of the normalized sum of independent Rademacher random variables approaches that of a Gaussian, in the limit.
Thus for Banach spaces that satisfy $\rho_X(\tau) \leq s\tau^2$, we also have the bound
\[
\Pr\left[\left\|f_1(Y_1)X_1+\cdots+f_n(Y_n)X_n\right\| \geq 
{uC\sqrt{s (\|X_1\|^2+\cdots+\|X_n\|^2)}}\right]
\leq
L\exp(-Cu^2(1-\lambda))
\]

\subsubsection{Bounds on the Schatten $\infty$-norm of a random matrix}

As an application, we are able to generalize bounds on the Schatten $\infty$-norm of a matrix with independent entries to matrices whose entries are obtained from a 
Markov chain with stationary distribution.

Let $\mathcal{I} \subseteq [d] \times [d]$ be the set of pairs $(i, j)$ such that $i \leq j$, and let $B = (b_{i, j}) \in \R^{d \times d}$ be a symmetric matrix with positive entries.
Let $X \in \R^{d \times d}$ be the random symmetric matrix whose entries are 
\[
X_{i, j} = 
\begin{cases}
\eps_{i, j}b_{i, j} & \text{ if } (i, j) \in \mathcal{I}\\
\eps_{j, i} b_{i, j} & \text{ otherwise}
\end{cases} 
\]
where $\eps_{i, j}$ are independent Rademacher random variables.
Then it was shown in~\cite{BvH16} that
\begin{equation}\label{eq:bvh}
\E[\|X\|_{S_{\infty}}] \leq \min\left\{C (\sigma + \sigma_*\sqrt{\log d}), \|B\|_{S_{\infty}}\right\}
\end{equation}
for some absolute constant $C$, where
\begin{equation}\label{eq:sigmadef}
\sigma = \max_{i} \sqrt{\sum_j b_{i, j}^2} \text{ and } \sigma_* = \max_{i, j} |b_{i, j}|.
\end{equation}
We generalize Eq.~\eqref{eq:bvh} to 
Markov chains with a stationary distribution.
In particular, we obtain a similar bound in terms of $\lambda = \|A-E_{\pi}\|_{L_2(\pi) \rightarrow L_2(\pi)}$ on the Schatten $\infty$-norm of a matrix whose entries are chosen in the following manner.
We start by choosing an arbitrary permutation of the entries in the diagonal and upper triangular part of the matrix.
Then we fill in the entries according to the order given by the permutation, using the values given by the Markov chain. 
Finally we fill in the entries in the lower triangular part of the matrix, so that the matrix is symmetric.
The case that the transition matrix is $A = E_{\pi}$ corresponds to choosing the entries of the diagonal and upper triangular part of the matrix independently, as in~\cite{BvH16}.

\begin{corollary}\label{cor:apptobvh}
Let $\{Y_i\}_{i=1}^{\infty}$ be a stationary 
Markov chain with state space $[N]$, transition matrix $A$, stationary probability measure $\pi$, and averaging operator $E_{\pi}$, so that $Y_1$ is distributed according to $\pi$.
Let $\lambda = \|A-E_{\pi}\|_{L_2(\pi) \rightarrow L_2(\pi)}$, let $f: V \rightarrow [-1, 1]$ be such that $\E[f(Y_i)] = 0$, and let $B \in \R^{d \times d}$ be a symmetric $d \times d$ matrix with positive entries.
For any injective function $\omega: \mathcal{I} \rightarrow \{1, 2, \ldots, (d^2+d)/2\} $, let $X$ be the symmetric matrix defined by 
\[
X_{i, j} = 
\begin{cases}
f(Y_{\omega(i, j)})b_{i, j} & \text{ if } (i, j) \in \mathcal{I}\\
f(Y_{\omega(j, i)}) b_{j, i} & \text{ otherwise}
\end{cases} 
\]
Then, 
\[
\E[\|X\|_{S_{\infty}}] \leq \min\left\{\frac{C}{\sqrt{1-\lambda}} (\sigma + \sigma_*\sqrt{\log d}), \|B\|_{S_{\infty}}\right\},
\]
for some absolute constant $C$, where $\sigma$ and $\sigma_*$ are defined as in Eq.~\eqref{eq:sigmadef}.
\end{corollary}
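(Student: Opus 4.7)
The plan is to realize $X$ as a sum of the form $\sum_k f(Y_k) M_k$ in the Banach space $(\R^{d \times d}, \|\cdot\|_{S_\infty})$ and then apply Theorem~\ref{thm:app}. Setting $n = (d^2+d)/2$, for each $k \in [n]$ let $(i_k, j_k) = \omega^{-1}(k)$ and define $M_k$ to be the symmetric matrix supported on positions $(i_k, j_k)$ and $(j_k, i_k)$ with value $b_{i_k, j_k}$ (or just the diagonal entry $b_{i_k, i_k}$ when $i_k = j_k$). By the definition of $X$ in the corollary, $X = \sum_{k=1}^n f(Y_k) M_k$. Applying Theorem~\ref{thm:app} with the common function $f_k = f$ then yields
\[
\Pr\!\left[\|X\|_{S_\infty} \geq uC \cdot \E\!\left\|{\textstyle \sum_k} g_k M_k\right\|_{S_\infty}\right] \leq L \exp\bigl(-Cu^2(1-\lambda)\bigr)
\]
for independent standard Gaussians $g_1, \ldots, g_n$.

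Next I would identify $G := \sum_k g_k M_k$ as a symmetric Gaussian random matrix whose entry in position $(i,j) \in \mathcal{I}$ is $g_{\omega(i,j)} b_{i,j}$; since $\omega$ is a permutation, these are independent centered Gaussians with variances $b_{i,j}^2$. Bandeira and van Handel~\cite{BvH16} establish their expected Schatten $\infty$-norm bound primarily for exactly this Gaussian model, giving $\E\|G\|_{S_\infty} \leq C(\sigma + \sigma_*\sqrt{\log d})$. Substituting this back into the tail bound and integrating via $\E\|X\|_{S_\infty} = \int_0^\infty \Pr[\|X\|_{S_\infty} \geq t]\,dt$ (splitting at the threshold where the right-hand side becomes $1$) yields
\[
\E\|X\|_{S_\infty} \leq \frac{C'}{\sqrt{1-\lambda}}\bigl(\sigma + \sigma_*\sqrt{\log d}\bigr),
\]
where the $1/\sqrt{1-\lambda}$ factor arises from the Gaussian tail integral $\int_0^\infty e^{-Cu^2(1-\lambda)}\,du$.

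For the second term in the minimum I would use the deterministic pointwise bound $|X_{i,j}| \leq b_{i,j}$, which is immediate from $|f|\leq 1$. Then for any unit vector $v \in \R^d$,
\[
|v^\top X v| \leq \sum_{i,j} |X_{i,j}||v_i||v_j| \leq \sum_{i,j} b_{i,j}|v_i||v_j| = |v|^\top B |v| \leq \|B\|_{S_\infty},
\]
where $|v|$ denotes the coordinatewise absolute value and the last inequality uses that $B$ is symmetric with nonnegative entries, so Perron--Frobenius produces a nonnegative top eigenvector witnessing $\|B\|_{S_\infty}$. Hence $\|X\|_{S_\infty} \leq \|B\|_{S_\infty}$ holds pointwise, and this survives taking expectations. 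Combining the two bounds gives the stated minimum.

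The main technical point is pinning down the correct incarnation of the Bandeira--van Handel bound: their Eq.~\eqref{eq:bvh} is stated for Rademacher symmetric matrices, while what Theorem~\ref{thm:app} naturally produces on the right-hand side is the Gaussian analog. Fortunately, the Gaussian model is in fact the one established directly in~\cite{BvH16}, so this step is citation bookkeeping rather than new work. The remaining ingredients---reindexing $X$ as a Banach-space-valued sum, invoking Theorem~\ref{thm:app}, and converting the tail bound into a bound on expectation---are routine.
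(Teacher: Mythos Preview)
Your proposal is correct and follows essentially the same route as the paper: represent $X$ as $\sum_k f(Y_k)M_k$ in $(\R^{d\times d},\|\cdot\|_{S_\infty})$, apply Theorem~\ref{thm:app}, invoke the Gaussian version of the Bandeira--van Handel bound (which the paper records as Eq.~\eqref{eq:bvhgaussian}), and finish with the pointwise inequality $\|X\|_{S_\infty}\le\|B\|_{S_\infty}$. Your write-up is actually more detailed than the paper's, which leaves both the tail-to-expectation integration and the justification of the deterministic bound implicit.
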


\subsection{Related Work}

In recent independent work by Fan, Jiang, and Sun~\cite{FJS18}, a Hoeffding bound for general Markov chains was also given.
Their bound is sharper, and in particular the constant $64e$ can be replaced by $2$ after replacing $1-\lambda$ by $(1-\lambda)/(1+\lambda)$.
However, our proof is arguably somewhat simpler.

In work by Garg, Lee, Song and Srivastava~\cite{GLSS17}, a version of Eq.~\eqref{thm:banachspacech} was proved for Markov chains when the Banach space is the set of $d \times d$ matrices under the Schatten $\infty$-norm, generalizing a result first shown by Ahlswede and Winter~\cite{AW06} (see also the monograph by Tropp~\cite{T15}).
Note that the Schatten $\infty$-norm of a $d \times d$ matrix is up to constant factors equal to the Schatten $\log(d)$-norm of that matrix, and the modulus of uniform smoothness of the set of $d \times d$ matrices under the Schatten $p$-norm is $O(p \tau^2)$.
Thus in this case the left-hand side of Eq.~\eqref{thm:banachspacech} is bounded above by $d\exp(c-cu^2)$.
Garg, Lee, Song and Srivastava showed that for a Markov chain $\{Y_i\}_{i=1}^{\infty}$ and functions $f_i: [N] \rightarrow \{x \in \R^{d \times d} \st \|x\|_{S_{\infty}} \leq 1\}$ such that $\E[f_i(Y_i)] = 0$,
\begin{equation*}
\Pr[\|f_1(Y_1)+\cdots+f_n(Y_n)\|_{S_{\infty}} \leq u\sqrt{n}] \leq 2d \exp\left(-c(1-\lambda)u^2\right)
\end{equation*}


\section{Preliminaries}

Given vectors $v, \pi \in \R^{N}$ so that $\pi$ has positive entries, (typically $\pi$ will be a distribution over $[N]$), let
\[
\|v\|_{L_{p}(\pi)}^p = \sum_{i=1}^N \pi_i |v_i|^p.
\]
We define the inner product for two vectors $u, v \in \R^{N}$ and $\pi \in \R^N$ with positive entries to be
\[
\langle u, v\rangle_{L_2(\pi)} \sum_{i=1}^{N} \pi_i u_i v_i.
\]
Additionally, we let the operator norm of a matrix $A \in \R^{N \times N}$ be defined as 
\[\|A\|_{{L_p(\pi)} \rightarrow {L_q(\pi)}} = \max_{v: \|v\|_{L_p(\pi)} = 1} \|Av \|_{L_q(\pi)}.\]
We will use $\ell_p$ in place of $L_p(\mathbf{1})$ where $\mathbf{1}$ is the vector whose entries are all $1$.

The Schatten $p$-norm of a matrix $A \in \R^{N \times N}$ is defined to be
\[
\|A\|_{S_p}^p = \sum_{i=1}^N s_i^p
\]
where $s_1, \ldots, s_{N}$ are the singular values of $A$.

For a vector $v$, we let $\diag(v)$ be the diagonal matrix where $\diag(v)_{i, i} = v_i$.

Let $A$ be a stochastic matrix, and let $\pi$ be a stationary distribution for$A$.
We let $(E_{\pi})_{ij} = \pi_j$ be the averaging operator on $L_{\infty}(\pi) \rightarrow L_{\infty}(\pi)$.
Note that $E_{\pi}$ is also stochastic, and that $E_{\pi}A = AE_{\pi} = E_{\pi}^2 = E_{\pi}$.

%

The following simple claim bounds $\|T\|_{L_2(\pi) \rightarrow L_2(\pi)}$ for a matrix $T$ in terms of $\|T\|_{L_1(\pi) \rightarrow L_1(\pi)}$ and $\|T\|_{L_{\infty}(\pi) \rightarrow L_{\infty}(\pi)}$.
This can be viewed as a special case of interpolation of matrix norms.

\begin{claim}\label{thm:mlrt}
For any matrix $T$,
\[
\|T\|_{L_2(\pi) \rightarrow L_2(\pi)}^2 \leq \|T\|_{L_1(\pi) \rightarrow L_1(\pi)}\|T\|_{L_{\infty}(\pi) \rightarrow L_{\infty}(\pi)}.
\]
\end{claim}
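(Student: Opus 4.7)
The plan is to prove this directly by a Cauchy--Schwarz argument, without invoking the general Riesz--Thorin machinery. First I would record explicit expressions for the two endpoint norms. A short row-sum computation using $(Tv)_i = \sum_j T_{ij}v_j$ yields
\[
\|T\|_{L_\infty(\pi) \to L_\infty(\pi)} = \max_i \sum_j |T_{ij}|,
\]
and from $\|Tv\|_{L_1(\pi)} \leq \sum_j |v_j|\sum_i \pi_i |T_{ij}|$ one reads off
\[
\|T\|_{L_1(\pi) \to L_1(\pi)} = \max_j \frac{1}{\pi_j}\sum_i \pi_i |T_{ij}|.
\]

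Next I would estimate $\|Tv\|_{L_2(\pi)}^2 = \sum_i \pi_i (Tv)_i^2$ for arbitrary $v$. The key trick is to split $|T_{ij}|$ symmetrically by writing $T_{ij}v_j = \sqrt{|T_{ij}|}\cdot \bigl(\sqrt{|T_{ij}|}\,v_j\,\sign(T_{ij})\bigr)$; Cauchy--Schwarz in the $j$-sum then gives
\[
(Tv)_i^2 \leq \Bigl(\sum_j |T_{ij}|\Bigr)\Bigl(\sum_j |T_{ij}|\,v_j^2\Bigr) \leq \|T\|_{L_\infty(\pi)\to L_\infty(\pi)} \sum_j |T_{ij}|\,v_j^2.
\]
Multiplying by $\pi_i$, summing over $i$, and swapping the order of summation turns the right-hand side into
\[
\|T\|_{L_\infty(\pi)\to L_\infty(\pi)}\sum_j v_j^2 \sum_i \pi_i |T_{ij}|,
\]
and the inner sum is at most $\|T\|_{L_1(\pi)\to L_1(\pi)}\,\pi_j$ by the formula above. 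Collecting terms gives the desired bound on $\|Tv\|_{L_2(\pi)}^2$ in terms of $\|v\|_{L_2(\pi)}^2$.

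Each step is elementary, so there is no real obstacle; the only thing to be careful about is the symmetric splitting of $|T_{ij}|$ between the two Cauchy--Schwarz factors, which is exactly what arranges matters so that the column-weighted $L_1$ norm appears naturally after swapping sums.
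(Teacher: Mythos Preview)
Your argument is correct and is essentially the same as the paper's: both apply Cauchy--Schwarz with the symmetric splitting $|T_{ij}| = \sqrt{|T_{ij}|}\cdot\sqrt{|T_{ij}|}$ to get $(Tv)_i^2 \le \bigl(\sum_j |T_{ij}|\bigr)\bigl(\sum_j |T_{ij}|\,v_j^2\bigr)$, then bound the first factor by the $L_\infty\to L_\infty$ norm and handle the second via the $L_1\to L_1$ norm. The only difference is presentation: you spell out the endpoint-norm formulas and the swap of summation explicitly, whereas the paper writes the second factor compactly as (essentially) $\||T|(x\circ x)\|_{L_1(\pi)}$.
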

\begin{proof}
For all $x, \pi \in \R^n$ so that $\pi$ has positive entries,
\begin{multline*}
\|Tx\|_{L_2(\pi) \rightarrow L_2(\pi)}^2 = \sum_{i=1}^n\pi_i\left(\sum_{j=1}^n T_{ij}x_{j} \right)^2 \leq \sum_{i=1}^n \pi_i\left(\sum_{j=1}^n |T_{ij}|\right)\left(\sum_{j=1}^n|T_{ij}|x_j^2 \right) 
\\
\leq \|T\|_{L_{\infty}(\pi) \rightarrow L_{\infty}(\pi)} \|T(x \circ x)\|_{L_1(\pi) \rightarrow L_1(\pi)} \leq \|T\|_{L_{\infty}(\pi) \rightarrow L_{\infty}(\pi)}\|T\|_{L_1(\pi) \rightarrow L_1(\pi)}\|x\|_{L_2(\pi)}^2
\end{multline*}
where the first inequality follows by Cauchy-Schwarz, and $\circ$ denotes entrywise product.\
\end{proof}

\section{Proof of Theorem~\ref{cor:tailbound}}

To prove Theorem~\ref{cor:tailbound}, we follow the strategy of bounding the $q$th moment for some even integer $q$, and using Markov's inequality to obtain a tail bound.
We start by expanding $(f_1(Y_1)+\cdots+f_n(Y_n))^q$ into a sum of monomials.

The following lemma bounds the expectation of monomials in the $f_i(Y_i)$.
The statement is similar to Lemma 3.3 in~\cite{RR17}.
Most of the proof is the same and is deferred to the appendix.
Let $S_{q-1} \subset \{0, 1\}^{q-1}$ be the set of strings with no consecutive $0$'s and so that $s_1, s_{q-1} = 1$ for all $s \in S_{q-1}$.

\begin{lemma}\label{lem:monomial}
Let $\{Y_i\}_{i=1}^{\infty}$ be a stationary 
Markov chain with state space $[N]$, transition matrix $A$, stationary probability measure $\pi$, and averaging operator $E_{\pi}$, so that $Y_1$ is distributed according to $\pi$.
Let $\lambda = \|A-E_{\pi}\|_{L_2(\pi) \rightarrow L_2(\pi)}$ and let $f_1, \ldots, f_n: [N] \rightarrow \R$ so that $\E[f_i(Y_i)] = 0$ for all $i$ and $|f_i(v)| \leq a_i$ for all $v \in [N]$ and all $i$.
For all $q$, and $w \in [n]^q$ such that $w_1 \leq w_2 \leq \cdots \leq w_q$
\[
\E[f_{w_1}(Y_{w_1}) f_{w_2}(Y_{w_2}) \cdots f_{w_q}(Y_{w_q})]
\leq
a_{w_1}a_{w_2}\cdots a_{w_q}\sum_{s \in S_{q-1}}
\left(\prod_{i: s_i = 1} \lambda^{w_{i+1}-w_i} \right).
\]
\end{lemma}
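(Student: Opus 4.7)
The plan is to convert the expectation into a matrix product, expand using the decomposition $A^{k_i} = E_{\pi} + (A^{k_i} - E_{\pi})$ at every step, and then carefully handle the $2^{q-1}$ resulting terms. Writing $D_j = \diag(f_{w_j})$ and $k_i = w_{i+1} - w_i$, stationarity and the Markov property give
\[
\E\left[\prod_{j=1}^q f_{w_j}(Y_{w_j})\right] = \pi^T D_1 A^{k_1} D_2 A^{k_2} \cdots A^{k_{q-1}} D_q \mathbf{1},
\]
and the substitution expands this scalar as a sum indexed by $s \in \{0,1\}^{q-1}$, where $s_i = 0$ selects $E_{\pi}$ in slot $i$ and $s_i = 1$ selects $A^{k_i} - E_{\pi}$.

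The next step is to argue that only $s \in S_{q-1}$ contribute nonzero terms, using the rank-one identity $E_{\pi} = \mathbf{1}\pi^T$ together with $\pi^T f_{w_j} = \E[f_{w_j}(Y_{w_j})] = 0$. Concretely: if $s_1 = 0$ the left end contains $\pi^T D_1 E_{\pi} = (\pi^T f_{w_1})\pi^T = 0$; if $s_{q-1} = 0$ the right end contains $E_{\pi} D_q \mathbf{1} = (\pi^T f_{w_q})\mathbf{1} = 0$; and if $s_i = s_{i+1} = 0$ for some $i$ the middle contains $E_{\pi} D_{i+1} E_{\pi} = (\pi^T f_{w_{i+1}}) \mathbf{1}\pi^T = 0$. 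The remaining strings are exactly those with $s_1 = s_{q-1} = 1$ and no two consecutive zeros.

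For each surviving $s \in S_{q-1}$, I would factor the scalar at every position where $s_i = 0$: since $E_{\pi} = \mathbf{1}\pi^T$, each such $E_{\pi}$ splits the scalar as (left portion ending in $D_i \mathbf{1} = f_{w_i}$) times (right portion starting with $\pi^T D_{i+1}$). Iterating this at every zero, the scalar becomes a product over the maximal runs of ones of $s$. A run covering positions $a, \ldots, b-1$ contributes
\[
\langle f_{w_a}, (A^{k_a} - E_{\pi}) D_{a+1} (A^{k_{a+1}} - E_{\pi}) \cdots (A^{k_{b-1}} - E_{\pi}) f_{w_b}\rangle_{L_2(\pi)},
\]
which, by Cauchy-Schwarz in $L_2(\pi)$, the bound $\|D_j\|_{L_2(\pi)\to L_2(\pi)} \leq a_{w_j}$, and the identity $A^k - E_{\pi} = (A - E_{\pi})^k$ (valid since $A E_{\pi} = E_{\pi} A = E_{\pi}^2 = E_{\pi}$) which gives $\|A^k - E_{\pi}\|_{L_2(\pi) \to L_2(\pi)} \leq \lambda^k$, is bounded in absolute value by $\bigl(\prod_{j=a}^b a_{w_j}\bigr)\bigl(\prod_{i=a}^{b-1} \lambda^{k_i}\bigr)$. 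Taking products across all runs, each $a_{w_j}$ appears exactly once and the $\lambda$-factors collect into $\prod_{i:\, s_i = 1} \lambda^{k_i}$, so the term indexed by $s$ is at most $\prod_j a_{w_j} \prod_{i:\, s_i = 1} \lambda^{w_{i+1} - w_i}$. Summing over $s \in S_{q-1}$ yields the lemma.

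The main obstacle is the combinatorial bookkeeping: verifying that the partition of $\{1,\ldots,q\}$ induced by the zeros of $s$ distributes each $a_{w_j}$ across the sub-bounds exactly once and that the $\lambda$-exponents line up with the ones of $s$. Once this accounting is in place, Cauchy-Schwarz and the operator-norm bound on $A^k - E_{\pi}$ close the argument, with no use of reversibility.
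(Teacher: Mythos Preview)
Your proposal is correct and follows essentially the same route as the paper: the paper packages the expansion $A^{k_i}=(A^{k_i}-E_\pi)+E_\pi$, the vanishing of non-$S_{q-1}$ terms, the splitting at each $E_\pi$, and the operator-norm bound into an appendix lemma (Lemma~\ref{lem:holderapplication}, via Claims~\ref{clm:jbetween} and~\ref{clm:claimcombo}), but the content is exactly what you wrote out. The only cosmetic difference is that the paper passes through an $L_1(\pi)$ norm (then Jensen to $L_2$) where you apply Cauchy--Schwarz directly; both give the same bound.
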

\begin{proof}
We apply Lemma~\ref{lem:holderapplication}, letting $k = q-1$, $u_i(v) = f_{w_i}(v)$ for all $v \in [N]$, and $T_i = A^{w_{i+1}-w_i} - E_{\pi}$.
Note that for all $k \geq 0$,
\[
A^k-E_{\pi} = A^{k}-A^{k-1}E_{\pi}-E_{\pi}A+E_{\pi}^2 = (A^{k-1}-E_{\pi})(A-E_{\pi}) = (A-E_{\pi})^k.
\]
The lemma follows by noting that $\|u_i\|_{L_\infty(\pi)} \leq a_{w_i}$ and $\|T_i\|_{L_2(\pi) \rightarrow L_2(\pi)} \leq \lambda^{w_{i+1}-w_i}$
\end{proof}

We obtain the following bound on the moments of $f_1(Y_1)+\cdots+f_n(Y_n)$. 

\begin{theorem}\label{lem:partition}
Let $\{Y_i\}_{i=1}^{\infty}$ be a stationary 
Markov chain with state space $[N]$, transition matrix $A$, stationary probability measure $\pi$, and averaging operator $E_{\pi}$, so that $Y_1$ is distributed according to $\pi$.
Let $\lambda = \|A-E_{\pi}\|_{L_2(\pi) \rightarrow L_2(\pi)}$ be less than $1$, and let $f_1, \ldots, f_n: [N] \rightarrow \R$ so that $\E[f_i(Y_i)] = 0$ for all $i$ and $|f_i(v)| \leq a_i$ for all $v \in [N]$ and all $i$.
Then for even $q$,
\[
\E[(f_1(Y_1)+\cdots+f_n(Y_n))^q]
\leq
4^q(q/2)!\left(\frac{1}{1-\lambda}\right)^{q/2} \left(\sum_{i=1}^n a_i^2 \right)^{q/2}.
\]
\end{theorem}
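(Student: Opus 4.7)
Plan: I would begin by expanding
\[
(f_1(Y_1)+\cdots+f_n(Y_n))^q=\sum_{w\in[n]^q}\prod_{j=1}^q f_{w_j}(Y_{w_j}),
\]
taking expectations, and using that each summand is invariant under permutations of the index tuple $w$ to apply Lemma~\ref{lem:monomial} after sorting. This reduces matters to controlling
\[
\sum_{w_1\leq\cdots\leq w_q}M(w)\prod_{j=1}^q a_{w_j}\sum_{s\in S_{q-1}}\prod_{i:s_i=1}\lambda^{w_{i+1}-w_i},
\]
where $M(w)\leq q!$ counts the multinomial number of preimages of the sorted tuple $w$ under the sort map.

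Next I would swap the order of summation and analyze the $w$-sum for each fixed $s\in S_{q-1}$. The constraints $s_1=s_{q-1}=1$ and no two consecutive zeros partition the $q$ indices into maximal blocks separated by the (non-adjacent) zeros of $s$; within a block of $\ell_k+1$ indices, the telescoping identity collapses the corresponding portion of $\prod_{s_i=1}\lambda^{w_{i+1}-w_i}$ to a single factor $\lambda^{w_{b_k}-w_{a_k}}$. Dropping the cross-block ordering constraints is a valid upper bound since every summand is nonnegative, so the sum factorizes into a product of per-block contributions
\[
R_\ell:=\sum_{u_0\leq u_1\leq\cdots\leq u_\ell}\prod_{j=0}^\ell a_{u_j}\;\lambda^{u_\ell-u_0}.
\]

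I would then bound $R_\ell$ by applying Cauchy-Schwarz to the inner middle-index sum, trading $\prod a_{u_j}$ for a power of $(\sum_i a_i^2)$ at the cost of a polynomial-in-gap factor $(u_\ell-u_0+1)^{(\ell-1)/2}$, and then using the identity $\sum_{d\geq 0}\lambda^d(d+1)^k\leq k!/(1-\lambda)^{k+1}$ to handle the remaining geometric sum in $\lambda^{u_\ell-u_0}$. This yields $R_\ell\leq c_\ell(\sum_i a_i^2)^{(\ell+1)/2}/(1-\lambda)^{(\ell+1)/2}$ for an explicit Gamma-like constant $c_\ell$. Since the block sizes satisfy $\sum_k(\ell_k+1)=q$, multiplying across blocks produces the target factor $(\sum_i a_i^2)^{q/2}/(1-\lambda)^{q/2}$ up to an overhead $\prod_k c_{\ell_k}$.

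The hardest part I anticipate is the final combinatorial accounting: closing the argument by summing over $s\in S_{q-1}$ and combining with $M(w)$ to match the $4^q(q/2)!$ prefactor in the statement. A crude bound that simply replaces $M(w)$ by $q!$ is too weak for large $q$, since $q!$ outgrows $4^q(q/2)!$. The tight estimate must instead exploit the interaction between $M(w)$ (which is strictly below $q!$ precisely when some $w_j$ coincide, in which case the corresponding $\lambda^{w_{i+1}-w_i}$ factors are trivially $1$) and the detailed structure of $S_{q-1}$, essentially repackaging the sum into a pairing of the $q$ indices into $q/2$ bonds, each of which contributes $O(\sum_i a_i^2/(1-\lambda))$. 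It is this final pairing/bookkeeping step that converts the apparent $q!$ into the sub-Gaussian-moment scaling $(q/2)!\cdot O(1)^q$, and is where I would expect to spend the bulk of the effort.
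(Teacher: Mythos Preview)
Your opening moves---expanding, sorting, and invoking Lemma~\ref{lem:monomial}---match the paper exactly, but the route diverges after that, and the gap you flag in your last paragraph is real and is not closed by the block-factorization/Cauchy--Schwarz strategy you outline. Once you pass to sorted tuples and carry the multinomial count $M(w)$, there is no clean mechanism by which $M(w)$ interacts with the block structure of a fixed $s\in S_{q-1}$ so as to beat $q!$ down to $(q/2)!$. Dropping the cross-block ordering constraints, as you propose, only further decouples $M(w)$ from the blocks, and your per-block bound on $R_\ell$ (which is essentially correct) does nothing to recover the lost factor. The ``repackaging into $q/2$ bonds'' is the right intuition but is not an argument.

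The paper sidesteps this entirely by \emph{not} sorting and counting. It keeps the sum over all of $[n]^q$ and, for each subset $\mathcal{I}\in\binom{[q]}{q/2}$, isolates those $w$ for which the multiset $\{w_i:i\in\mathcal{I}\}$ coincides with the odd-position entries $\{\sigma(w)_1,\sigma(w)_3,\ldots,\sigma(w)_{q-1}\}$. For each fixed $\mathcal{I}$ and $s$, the resulting sum is written as a bilinear form $\langle a^{\otimes q/2},\,T_{\mathcal{I},s}\,a^{\otimes q/2}\rangle$ for an explicit nonnegative $n^{q/2}\times n^{q/2}$ matrix $T_{\mathcal{I},s}$, and one bounds $\|T_{\mathcal{I},s}\|_{\ell_2\to\ell_2}$ via the interpolation inequality $\|T\|_{\ell_2\to\ell_2}^2\le\|T\|_{\ell_1\to\ell_1}\|T\|_{\ell_\infty\to\ell_\infty}$ (Claim~\ref{thm:mlrt}). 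Each of the latter norms is just a maximal row or column sum: fixing the $q/2$ odd-position values of $\sigma(w)$ leaves at most $(q/2)!$ orderings for the even-position coordinates, and each even-position index ranges between two already-fixed odd-position values while contributing one geometric sum bounded by $1/(1-\lambda)$ (this uses that for every even $k$ either $s_{k-1}=1$ or $s_k=1$). Hence $\|T_{\mathcal{I},s}\|_{\ell_2\to\ell_2}\le (q/2)!/(1-\lambda)^{q/2}$, and the prefactor $4^q$ is simply $|S_{q-1}|\cdot\binom{q}{q/2}\le 2^q\cdot 2^q$. The missing idea in your plan, then, is to split the weight $\prod_j a_{w_j}$ as $(a^{\otimes q/2})_{w_{\mathcal{I}}}\cdot(a^{\otimes q/2})_{w_{[q]\setminus\mathcal{I}}}$ and push all of the $\lambda$-structure into an operator whose norm you can compute; this bilinear-form viewpoint is exactly what replaces the multinomial $q!$ by the sub-Gaussian $(q/2)!$.
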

\begin{proof}
Let $\sigma: [n]^q \rightarrow [n]^q$ be the function where $\sigma(w)$ is the sorted list of coordinates of $w$ in non-decreasing order.
Then by Lemma~\ref{lem:monomial},
\begin{align}
\E[(f_1(Y_1)+\cdots+f_n(Y_n))^q]
&=
\sum_{w \in [n]^q} \E[f_{w_1}(Y_{w_1})f_{w_2}(Y_{w_2})\cdots f_{w_q}(Y_{w_q})] \nonumber\\
&\leq
\sum_{w \in [n]^q} a_{w_1}a_{w_2}\cdots a_{w_q}\sum_{s \in S_{q-1}}
\left(\prod_{i: s_i = 1} \lambda^{\sigma(w)_{i+1}-\sigma(w)_i} \right). \label{eq:monomintermediate}
\end{align}
Let $\binom{[q]}{q/2}$ denote the collection of subsets of $[q]$ of size exactly $q/2$.
For each subset $\mathcal{I}  \in \binom{[q]}{q/2}$, let $W_{\mathcal{I}} \subset [n]^q$ be the set of all vectors $w$ such that 
for each $j \in [n]$,
\[
\left|\{i \st i \in \mathcal{I} \text{ and } w_i = j\}\right| = \left|\{i \st i \in \{1, 3, 5, \ldots, q-1\} \text{ and } \sigma(w)_i = j\}\right|,
\]
i.e. the multi-set $\bigcup_{i \in \mathcal{I}} \{w_i\}$ is equal to the multi-set $\{\sigma(w)_1, \sigma(w)_3, \sigma(w)_5, \ldots, \sigma(w)_{q-1}\}$.
Let $w_{\mathcal{I}}, w_{[q] \backslash \mathcal{I}} \in [n]^{q/2}$ be the restriction of $w$ to the coordinates in $\mathcal{I}$ and $[q] \backslash \mathcal{I}$ respectively.
Additionally, for each $\mathcal{I}  \in \binom{[q]}{q/2}$ and $s \in S_{q-1}$, let $T_{\mathcal{I}, s}$ be the $n^{q/2} \times n^{q/2}$ matrix defined as follows.  
For each $w \in [n]^q$, the entry in the $w_{\mathcal{I}}$th row and $w_{[q] \backslash \mathcal{I}}$th column of $T_{\mathcal{I}, s}$ is
\[
T_{\mathcal{I}, s}(w_{\mathcal{I}}, w_{[q] \backslash \mathcal{I}}) 
=
\begin{cases}
\prod_{i: s_i = 1} \lambda^{\sigma(w)_{i+1}-\sigma(w)_i} & \text{if } w \in W_{\mathcal{I}}\\
0 & \text{ otherwise}.
\end{cases}
\]
Because 
\[
\bigcup_{\mathcal{I} \in \binom{[q]}{q/2}} W_{\mathcal{I}} = [n]^q,
\]
Eq.~\eqref{eq:monomintermediate} can be bounded above by
\begin{align*}
\sum_{s \in S_{q-1}}\sum_{\mathcal{I} \in \binom{[q]}{q/2}} \sum_{w \in W_{\mathcal{I}}} a_{w_1}a_{w_2}\cdots a_{w_q}&\left(\prod_{i: s_i = 1} \lambda^{\sigma(w)_{i+1}-\sigma(w)_i} \right)
=
\sum_{s \in S_{q-1}}\sum_{\mathcal{I} \in \binom{[q]}{q/2}} \left\langle a^{\otimes q/2}, T_{\mathcal{I}, s} a^{\otimes q/2}\right\rangle_{\ell_2} \\
&\leq |S_{q-1}|\binom{q}{q/2}\max_{s \in S_{q-1}, \mathcal{I} \in \binom{[q]}{q/2}}\|T_{\mathcal{I}, s}\|_{\ell_2 \rightarrow \ell_2}\|a\|_{\ell_2}^q,
\end{align*}
where 
$a^{\otimes q/2} \in \R^{n^{q/2}}$ is the vector such that $a^{\otimes q/2}_{i_1, \ldots, i_{q/2}} = a_{i_1}a_{i_2}\cdots a_{i_{q/2}}$ for $i \in [n]^{q/2}$ and thus $\|a^{\otimes q/2}\|_{\ell_2} = \|a\|_{\ell_2}^{q/2}$.
Both $|S_{q-1}|$ and $\binom{q}{q/2}$ are each bounded above by $2^q$.
Thus by Claim~\ref{thm:mlrt}, it is enough to show that
\[
\|T_{\mathcal{I}, s}\|_{\ell_1 \rightarrow \ell_1}, \|T_{\mathcal{I}, s}\|_{\ell_{\infty} \rightarrow \ell_{\infty}} \leq (q/2)!\left(\frac{1}{1-\lambda}\right)^{q/2}.
\]
We show this for $\|T_{\mathcal{I}, s}\|_{\ell_{\infty} \rightarrow \ell_{\infty}}$; the proof for $\|T_{\mathcal{I}, s}\|_{\ell_1 \rightarrow \ell_1}$ is similar.

Because the entries of $T$ are positive, $\|T_{\mathcal{I}, s}\|_{\ell_{\infty} \rightarrow \ell_{\infty}}$ is just the largest row sum of $T_{\mathcal{I}, s}$.
Without loss of generality, assume that $\mathcal{I} = \{1, 3, 5, \ldots, q-1\}$.
Then the sum of the entries of the row corresponding to $w_{\mathcal{I}} = (w_1, w_3, w_5, \ldots, w_{q-1})$ is
\begin{align*}
\sum_{w_2, w_4, \ldots, w_q \st w \in W_{\mathcal{I}}} T_{\mathcal{I}, s}(w_{\mathcal{I}}, w_{[q] \backslash \mathcal{I}}) 
&\leq (q/2)!\sum_{w_2 = \sigma(w)_1}^{\sigma(w)_3} \sum_{w_4 = \sigma(w)_3}^{\sigma(w)_5} \cdots \sum_{w_q = \sigma(w)_{q-1}}^{n} \prod_{i: s_i = 1}\lambda^{\sigma(w)_{i+1}-\sigma(w)_i} \\
&\leq (q/2)! \left(\frac{1}{1-\lambda}\right)^{q/2},
\end{align*}
as desired.
The first inequality follows from the fact that $w \in W_{\mathcal{I}}$ and $w_1, w_3, w_5, \ldots, w_{q-1}$ determine $\sigma(w)_1, \sigma(w)_3, \sigma(w)_5, \ldots, \sigma(w)_{q-1}$ exactly,
and that there are at most $(q/2)!$ possible orderings of $w_2, w_4, \ldots, w_q$.
The second inequality follows from the definition of $S_{q-1}$, which implies that for every positive even integer $k \leq q$, either $s_{k-1} = 1$ or $s_{k} = 1$, along with the formula for the sum of an infinite geometric series.
\end{proof}

Finally, Theorem~\ref{cor:tailbound} follows by considering the moment generating function and applying Markov's inequality.

\begin{proof}[ of Theorem~\ref{cor:tailbound}]
If $\lambda \geq 1$ or if $u \leq 8/\sqrt{1-\lambda}$, the theorem holds trivially as the right-hand side is greater than $1$.

Otherwise, we start by bounding the moment generating function.
Let $\theta = (1-\lambda)u/(32(a_1^2+\cdots+a_n^2)^{1/2})$
By Theorem~\ref{lem:partition} and keeping in mind that by Jensen's inequality, odd moments are bounded above by even moments,
\begin{align*}
\E\left[\exp(\theta(f_1(Y_1)+\cdots+f_n(Y_n)))\right]
&=
\sum_{q=0}^{\infty} \frac{\E[\theta(f_1(Y_1)+\cdots+f_n(Y_n))^q]}{q!} \\
&\leq 1+\sum_{q=1}^{\infty} \frac{(1-\lambda)^{(2q-1)/2} u^{2q-1} q!}{8^{2q-1}(2q-1)!}+\frac{(1-\lambda)^{q} u^{2q} q!}{8^{2q} (2q)!} \\
&\leq 2 \sum_{q=0}^{\infty} \frac{(1-\lambda)^q u^{2q}}{8^{2q} q!} \\
&= 2\exp\left(u^2(1-\lambda)/64\right).
\end{align*}
By Markov's inequality, 
\begin{align*}
\Pr\Biggr[f_1(Y_1)+\cdots+f_n(Y_n) &\geq u\Bigg(\sum_{i=1}^n a_i^2\Bigg)^{1/2} \Biggr] \\
&=
\Pr\Biggr[\exp(\theta (f_1(Y_1)+\cdots+f_n(Y_n))) \geq \exp\Biggr(\theta u\Biggr(\sum_{i=1}^n a_i^2\Biggr)^{1/2} \Biggr)\Biggr] \\
&\leq 
\frac{\E\left[\exp(\theta (f_1(Y_1)+\cdots+f_n(Y_n)))\right]}{ \exp\left(\theta u\left(\sum_{i=1}^n a_i^2\right)^{1/2} \right)} \\
&\leq
2\exp\left(u^2(1-\lambda)/64 - u^2(1-\lambda)/32\right) \\
&= 2\exp\left(-u^2(1-\lambda)/64\right)
\end{align*}
The final bound follows by doing the same for the left tail, and noting that if $u \geq 8/\sqrt{1-\lambda}$, either $4\exp(-u^2(1-\lambda)/64) \leq 2\exp(-u^2(1-\lambda)/(64e))$, or $2\exp(-u^2(1-\lambda)/(64e) \geq 1$.

\end{proof}

We note that it is possible to obtain stronger tail bounds that improve on the constant factor by optimizing some of the calculations above, but we will not do so here.

\section{Extension to vector--valued random variables}

To prove Theorem~\ref{thm:app} we use the techniques of Talagrand's generic chaining.
These techniques apply to random variables that satisfy the ``increment condition," which we define below.
\begin{definition}
A metric space $(T, d)$ and process $(Z_t)_{t \in T}$ satisfies the increment condition if for all $u$ and all $s, t \in T$,
\[
\Pr[|Z_s-Z_t| \geq u] \leq 2\exp\left(-\frac{u^2}{2d(s, t)^2}\right).
\]
\end{definition}

When $(Z_t)_{t \in T}$ is a gaussian process, that is $Z_t$ is gaussian for all $t \in T$, we can equip $T$ with the canonical distance, $d(s, t) = \E[(Z_s-Z_t)^2]^{1/2}$.

Theorem~\ref{cor:tailbound} essentially states that for a a Markov chain $\{Y_i\}_{i=1}^{\infty}$ and functions $f_1, \ldots, f_n: [N] \rightarrow [-1, 1]$ with $\E[f_i(Y_i)] = 0$, the process $(Z_t)_{t \in T}$ defined by $Z_t = (f_1(Y_1)t_1, \ldots, f_n(Y_n)t_n)$ for $T = \R^n$ satisfies the increment condition if the associated distance is $\sqrt{32e/(1-\lambda)}$ times the Euclidean distance.

We also define the $\gamma_2$ functional.
\begin{definition}
\[
\gamma_2(T, d) = \inf \sup_{t \in T} \sum_{i=0}^{\infty} 2^{i/2} \min_{t' \in T_i} d(t, t'),
\]
where the infimum is taken over all sequences of subsets $T_0 \subseteq T_1\subseteq \cdots \subseteq T$ such that $|T_0| = 1$ and $|T_i| \leq 2^{2^{i}}$ for $i \geq 1$.
\end{definition}


The majorizing measures theorem, due to Talagrand~\cite{T87} (see also Theorem 2.4.1 in~\cite{T14}), gives bounds on the expected value of $\sup_{t \in T} Z_t$, where $(Z_t)_{t\in T}$ is a gaussian process, in terms of $\gamma_2(T, d)$ where $d$ is the canonical distance.
We state the theorem below.

\begin{theorem}[Talagrand's majorizing measures theorem]\label{thm:mm}
For some universal constant $C$, and for every gaussian process $(Z_t)_{t \in T}$,
\begin{equation*}\label{eq:mm}
\frac{1}{C}\gamma_2(T, d)\leq 
\mathbb{E}\left[\sup_{t \in T}Z_t\right]
\leq
{C}\gamma_2(T, d),
\end{equation*}
where $d(s, t) = \E[(Z_s-Z_t)^2]^{1/2}$.
\end{theorem}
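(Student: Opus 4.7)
The statement is a classical result of Talagrand, and the author will almost certainly invoke it as a black box; nevertheless, the plan is to prove the two inequalities separately. The direction $\E[\sup_{t \in T} Z_t] \leq C\gamma_2(T, d)$ is the routine generic chaining argument, while the reverse direction $\gamma_2(T, d) \leq C\E[\sup_{t \in T} Z_t]$ is the deep content of the theorem and requires manufacturing a good admissible sequence from knowledge of the process alone.

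For the upper bound, I would fix an arbitrary admissible sequence $(T_i)_{i \geq 0}$ and for each $t \in T$ let $\pi_i(t) \in T_i$ be a nearest point. Writing the telescoping identity $Z_t - Z_{t_0} = \sum_{i \geq 0}(Z_{\pi_{i+1}(t)} - Z_{\pi_i(t)})$, each increment is a mean-zero gaussian with standard deviation at most $d(\pi_{i+1}(t), \pi_i(t))$. The standard gaussian tail bound gives $\Pr[|Z_{\pi_{i+1}(t)} - Z_{\pi_i(t)}| \geq u \cdot 2^{i/2} d(\pi_{i+1}(t), \pi_i(t))] \leq 2\exp(-u^2 2^{i-1})$, and a union bound over the at most $2^{2^{i+2}}$ pairs appearing at level $i$, followed by integration in $u$, yields $\E[\sup_t (Z_t - Z_{t_0})] \leq C \sup_{t \in T} \sum_{i \geq 0} 2^{i/2} d(\pi_i(t), \pi_{i+1}(t))$. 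Taking the infimum over admissible sequences produces the desired upper bound on the expected supremum.

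The hard direction is the lower bound, and it is where essentially all of the difficulty lies. The plan is to construct, using only the process itself, an admissible sequence witnessing $\gamma_2(T, d) \lesssim \E[\sup_t Z_t]$. The modern route proceeds via Talagrand's growth-condition / partitioning scheme: one recursively produces partitions of $T$ into pieces whose diameters shrink geometrically, at each stage applying a Sudakov-type minoration to exploit the dichotomy that either the supremum of the process inside one ball carries most of the mass (so one recurses there) or many well-separated balls exist (contributing to the chaining sum). Picking a representative from each cell produces the admissible sequence. The main obstacle is precisely this construction, and since the result is classical (\cite{T87}, Theorem~2.4.1 of~\cite{T14}) I would not attempt to reproduce it but rather cite it and move on to use Theorem~\ref{thm:mm} together with Theorem~\ref{cor:tailbound} to establish Theorem~\ref{thm:app}.
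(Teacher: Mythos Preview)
Your anticipation is correct: the paper does not prove Theorem~\ref{thm:mm} at all, but simply states it with a citation to Talagrand~\cite{T87} and to Theorem~2.4.1 of~\cite{T14}, and then uses it as a black box in the proof of Theorem~\ref{thm:app}. Your final conclusion---to cite the result and move on---is exactly what the paper does, so the proposal matches the paper's approach.
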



We also use the following tail bound for any process that satisfies the increment condition, which is given as Theorem 2.2.27 in~\cite{T14}.

\begin{theorem}\label{thm:taltail}
If the process $(Z_t)$ satisfies the increment condition, then for $u > 0$,
Then,
\[
\Pr\left[\sup_{s, t \in T} |X_s-X_t| \geq L \gamma_2(T, d) + u L \sup_{t_1, t_2 \in T} d(t_1, t_2)\right] \leq L\exp(-u^2).
\]
\end{theorem}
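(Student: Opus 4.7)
The plan is to carry out Talagrand's generic chaining argument. Fix a near-optimal admissible sequence $(T_n)_{n \geq 0}$ of subsets of $T$ with $|T_0|=1$ and $|T_n|\leq 2^{2^n}$ satisfying $\sup_{t\in T} \sum_{n \geq 0} 2^{n/2} d(t,T_n) \leq 2\gamma_2(T,d)$, and for each $t$ and each $n$ let $\pi_n(t)\in T_n$ denote a closest point in $T_n$ to $t$. Since $|T_0|=1$, the point $\pi_0(t)=t_0$ does not depend on $t$, so by the triangle inequality $\sup_{s,t\in T}|Z_s-Z_t|\leq 2\sup_{t\in T}|Z_t-Z_{t_0}|$, and it suffices to control the right-hand side. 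The starting point is the telescoping identity
\[
Z_t - Z_{t_0} \;=\; \sum_{n \geq 1} \bigl(Z_{\pi_n(t)} - Z_{\pi_{n-1}(t)}\bigr),
\]
which reduces the problem to controlling the sum of increments along the chain.

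For each level $n \geq 1$ and each $s\in T_n$, the increment condition gives
\[
\Pr\!\left[|Z_s - Z_{\pi_{n-1}(s)}| \geq v_n\,d(s,\pi_{n-1}(s))\right] \leq 2\exp(-v_n^2/2)
\]
for every $v_n>0$. The idea is to pick $v_n$ of the form $L_1 2^{n/2}+Cu$ for a sufficiently large universal constant $L_1$, and to union bound over the at most $|T_n|\leq 2^{2^n}$ pairs at level $n$. Using $(L_1 2^{n/2}+Cu)^2 \geq L_1^2 2^n + C^2 u^2$ and choosing $L_1$ so that $L_1^2/2$ exceeds $2\ln 2$, the probability that some level-$n$ pair is bad becomes at most $2\cdot 2^{-2^n}\exp(-C^2 u^2/2)$. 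Summing over $n\geq 1$ and choosing $C$ appropriately yields total failure probability $\leq L\exp(-u^2)$.

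On the intersection of the complements of the bad events, the telescoping identity gives, for every $t$,
\[
|Z_t - Z_{t_0}| \;\leq\; L_1 \sum_{n\geq 1} 2^{n/2} d(\pi_n(t),\pi_{n-1}(t)) \;+\; Cu\sum_{n\geq 1} d(\pi_n(t),\pi_{n-1}(t)).
\]
Using $d(\pi_n,\pi_{n-1})\leq 2d(t,T_{n-1})$ and the near-optimality of the chain, the first sum is bounded by a constant multiple of $\gamma_2(T,d)$, yielding the $L\gamma_2(T,d)$ contribution. The step I expect to be the main obstacle is forcing the second sum to contribute only $L\cdot\mathrm{diam}(T)$ rather than $L\cdot\gamma_2(T,d)$: naively one only has $\sum_{n\geq 1}d(\pi_n,\pi_{n-1})\leq 2\gamma_2(T,d)$, which is strictly weaker whenever $\gamma_2(T,d)\gg\mathrm{diam}(T)$. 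To sharpen this I would truncate the chaining at a level $n_u \asymp \log_2(u^2)$, run the fine chain up to $T_{n_u}$ with weights $L_1 2^{n/2}$ (contributing only to the $\gamma_2$-term), and then bound $\sup_{s\in T_{n_u}}|Z_s-Z_{t_0}|$ directly by applying the increment condition once to each pair $(s,t_0)$ with $d(s,t_0)\leq\mathrm{diam}(T)$ and threshold a constant multiple of $u\cdot\mathrm{diam}(T)$; the factor $|T_{n_u}|\leq 2^{2u^2}$ in this union bound is absorbed into the $\exp(-v^2/2)$ tail by choosing the multiplicative constant on $u$ large enough.
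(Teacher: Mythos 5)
Your proposal is correct, but note that the paper does not prove this statement at all: it is quoted verbatim as Theorem 2.2.27 of \cite{T14} and used as a black box. What you have written is, in essence, a reconstruction of Talagrand's own proof, and you have correctly isolated the one genuinely delicate point. The naive chaining with weights $L_1 2^{n/2}+Cu$ that you sketch first does indeed fail, since it leaves you with $u\sum_{n\ge 1} d(\pi_n(t),\pi_{n-1}(t))$, which is only $O(u\,\gamma_2(T,d))$ and not $O(u\,\sup_{t_1,t_2}d(t_1,t_2))$; the truncation you propose in the last paragraph --- stop the fine chain (weights $L_1 2^{n/2}$ only) at the level $n_u$ with $2^{n_u}\asymp u^2$, and control $\sup_{s\in T_{n_u}}|Z_s-Z_{t_0}|$ by a single union bound over the at most $2^{2^{n_u}}\le 2^{cu^2}$ points of $T_{n_u}$ at threshold $au\cdot\sup_{t_1,t_2}d(t_1,t_2)$ with $a$ large --- is exactly the device used in the source, and the bookkeeping works: for $n>n_u$ one has $2^{2^{n+1}}\exp(-L_1^2 2^{n-1})\le 2^{-2^{n+1}}$ for $L_1$ a large constant, so the tail levels contribute at most $L\exp(-u^2)$ to the failure probability, while the head contributes $2^{cu^2}\cdot 2\exp(-a^2u^2/2)\le L\exp(-u^2)$ once $a^2/2>c\ln 2+1$. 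Two small points to make explicit in a polished write-up: reduce to $u\ge 1$ first (for $u<1$ the right-hand side exceeds $1$ once $L\ge e$, and this is also needed for $n_u\ge 0$ to exist), and state that the symmetry step $\sup_{s,t}|Z_s-Z_t|\le 2\sup_t|Z_t-Z_{t_0}|$ only costs a constant absorbed into $L$.
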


We now describe how to select $T$ to apply the above tools to the setting of Theorem~\ref{thm:app}.
Let $(X, \|\cdot\|)$ be a Banach space, and let $(X^*, \|\cdot\|_*)$ be the dual space of $X$ with closed unit ball $B^*$.
Recall that for $x \in X$,
\[
\|x\| = \sup_{x^* \in B^*} |\langle x^*, x\rangle|.
\]
(see for instance, Theorem 4.3 in~\cite{R91}).
For fixed $X_1, \ldots, X_n \in X$, let $T \subset \R^{n}$ be the set of points,
\begin{equation}
T = \left\{(\langle x^*, X_1\rangle, \langle x^*, X_2 \rangle, \ldots, \langle x^*, X_n \rangle) \st x^* \in B^* \right\}.
\label{eq:defpoints}
\end{equation}
Note that $T$ is symmetric, as for every $x^* \in B^*$, we also have $-x^* \in B^*$.
It follows that
\begin{equation}\label{eq:propofT}
\left\|f_1X_1+\cdots+f_nX_n\right\|
=
\sup_{t \in T} \langle f, t \rangle.
\end{equation}

Finally, we prove Theorem~\ref{thm:app}.

\begin{proof}[ of Theorem~\ref{thm:app}]
Consider the metric space $(T, d)$ where $T$ is as constructed in Eq.~\eqref{eq:defpoints} and $d(s, t) = \sqrt{32e/(1-\lambda)}\|s-t\|_{\ell_2}$.
Then by Theorem~\ref{cor:tailbound}, the process $(Z_t)_{t \in T}$ defined by $Z_t = (f_1(Y_1)t_1, \ldots, f_n(Y_n)t_n)$ satisfies the increment condition.

Additionally, consider the Gaussian process $(Z'_t)_{t \in T}$ on the metric space $(T, d')$, so that $Z_t = g_1 t_1 +\cdots + g_n t_n$ for independent standard Gaussian variables $g_1, \ldots, g_n$ and $d' = \E[(Z_s-Z_t)^2]^{1/2}$.
Then by Theorem~\ref{thm:mm},
\[
\gamma_2(T, d) = \sqrt{\frac{32e}{1-\lambda}}\gamma_2(T, d') \leq \frac{C}{1-\lambda} \mathbb{E}\left[\sup_{t \in T}Z_t'\right] 
\]

The theorem then follows from Theorem~\ref{thm:taltail} the observation that $\sup_{s, t} |Z_s-Z_t| = 2\sup_{t} Z_t$ as $T$ is symmetric, and Eq.~\eqref{eq:propofT}.
\end{proof}

%

\subsection{Comparison to matrices with independent entries}

We prove Corollary~\ref{cor:apptobvh}, which follows from a straightforward application of Theorem~\ref{thm:app}.

In order to apply Theorem~\ref{thm:app}, we need a bound on $\E[\|X'\|_{S_{\infty}}]$ when $X'$ is the random symmetric matrix whose entries are
\[
X'_{i, j} = 
\begin{cases}
g_{i, j}b_{i, j} & \text{ if } (i, j) \in \mathcal{I}\\
g_{j, i} b_{i, j} & \text{ otherwise}
\end{cases} 
\]
where $g_{i, j} \sim \mathcal{N}(0, 1)$ are independent standard Gaussian random variables (rather than Rademacher random variables, as in Eq.~\eqref{eq:bvh}).
This setting was also discussed in~\cite{BvH16} in which it was shown that
\begin{equation}\label{eq:bvhgaussian}
\E[\|X'\|_{S_{\infty}}] \leq C (\sigma + \sigma_*\sqrt{\log d}),
\end{equation}
where $\sigma$ and $\sigma_*$ are defined as in Eq.~\eqref{eq:sigmadef}.

\begin{proof}[ of Corollary~\ref{cor:apptobvh}]
Let $X'$ be the random matrix defined above.
Then by Theorem~\ref{thm:app} and Eq.~\eqref{eq:bvhgaussian},
\[
\E[\|X\|_{S_{\infty}}] \leq \frac{C}{\sqrt{1-\lambda}} \E[\|X'\|_{S_{\infty}}] \leq \frac{C'}{\sqrt{1-\lambda}} (\sigma + \sigma_*\sqrt{\log d})
\]
Finally, because $|f(v)| \leq 1$ for all $v \in [N]$ and $B$ has positive entries, it follows that $\|X\|_{S_{\infty}} \leq \|B\|_{S_{\infty}}$, always.
\end{proof}

\subsection*{Acknowledgments} I would like to thank Oded Regev, Noah Stephens-Davidowitz, and the anonymous referees for their valuable comments.
I would also like to thank the anonymous referees for pointing out that the proof also applies to non-reversible Markov chains.

\bibliographystyle{alphaabbrv}
\bibliography{expanderhoeffding}

\appendix

\section{}

In this section, we give the tools needed to prove Lemma~\ref{lem:monomial}.
They are either taken directly from~\cite{RR17} (which is based on techniques used in~\cite{NaorRR17}), or are straightforward adaptations.

\begin{claim}\label{clm:jbetween}
For all $k \ge 1$, matrices $R_1,\ldots,R_k \in \R^{N \times N}$, and distributions $\pi$ over $[N]$
\[
\left \langle \mathbf{1}, R_1 E_{\pi} R_2 E_{\pi} \cdots E_{\pi} R_k \mathbf{1} \right\rangle_{L_2(\pi)}
=
\prod_{i=1}^k \langle \mathbf{1}, R_i\mathbf{1} \rangle_{L_2(\pi)}
\le \prod_{i=1}^k \|R_i \mathbf{1}\|_{L_1(\pi)} \; .
\]
\end{claim}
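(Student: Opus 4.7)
The plan is to exploit the rank-one structure of $E_{\pi}$ to collapse each intermediate $E_{\pi}$ into a scalar, which immediately produces the asserted product formula.

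First I would record the key identity: for any $v\in\R^N$, the definition $(E_{\pi})_{ij}=\pi_j$ gives $(E_{\pi}v)_i=\sum_j \pi_j v_j = \langle \mathbf{1},v\rangle_{L_2(\pi)}$, so
\[
E_{\pi}v \;=\; \langle \mathbf{1},v\rangle_{L_2(\pi)}\,\mathbf{1}.
\]
Thus $E_{\pi}$ acts on any vector by replacing it with its $\pi$-mean times $\mathbf{1}$, which is exactly the mechanism that will split the expression $R_1 E_{\pi} R_2 E_{\pi} \cdots E_{\pi} R_k\mathbf{1}$ into a product.

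Next I would prove the equality by induction on $k$ (or, equivalently, by peeling off factors from the right). The base case $k=1$ is the trivial identity $\langle\mathbf{1},R_1\mathbf{1}\rangle_{L_2(\pi)}=\langle\mathbf{1},R_1\mathbf{1}\rangle_{L_2(\pi)}$. For the inductive step, applying the identity above to $v = R_k\mathbf{1}$ gives
\[
E_{\pi}R_k\mathbf{1} \;=\; \langle \mathbf{1},R_k\mathbf{1}\rangle_{L_2(\pi)}\,\mathbf{1},
\]
so the scalar $\langle \mathbf{1},R_k\mathbf{1}\rangle_{L_2(\pi)}$ pulls out of the whole expression and the remaining factor is $R_1 E_{\pi} R_2\cdots E_{\pi} R_{k-1}\mathbf{1}$, to which the induction hypothesis applies. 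This establishes the first equality of the claim.

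Finally, for the inequality I would note that each factor satisfies
\[
\langle \mathbf{1}, R_i\mathbf{1}\rangle_{L_2(\pi)}
\;=\; \sum_{j} \pi_j (R_i\mathbf{1})_j
\;\le\; \sum_{j} \pi_j \bigl|(R_i\mathbf{1})_j\bigr|
\;=\; \|R_i\mathbf{1}\|_{L_1(\pi)},
\]
and then deduce the inequality for the product: if an even number of the scalars $\langle\mathbf{1},R_i\mathbf{1}\rangle_{L_2(\pi)}$ is negative, the bound follows by multiplying the absolute-value inequalities; otherwise the left-hand side is nonpositive while the right-hand side is nonnegative. The only conceptual step is the rank-one reduction of $E_{\pi}$; everything else is bookkeeping, so I do not anticipate a genuine obstacle.
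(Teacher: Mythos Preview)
Your proposal is correct. The paper actually states Claim~\ref{clm:jbetween} without proof (it is imported from~\cite{RR17} as an elementary fact), so there is nothing to compare against; your argument via the rank-one identity $E_{\pi}v=\langle\mathbf{1},v\rangle_{L_2(\pi)}\mathbf{1}$ is exactly the intended one-line justification. As a minor simplification, the sign case analysis at the end is unnecessary: since $|\langle\mathbf{1},R_i\mathbf{1}\rangle_{L_2(\pi)}|\le\|R_i\mathbf{1}\|_{L_1(\pi)}$, one has $\prod_i \langle\mathbf{1},R_i\mathbf{1}\rangle_{L_2(\pi)}\le\bigl|\prod_i \langle\mathbf{1},R_i\mathbf{1}\rangle_{L_2(\pi)}\bigr|=\prod_i|\langle\mathbf{1},R_i\mathbf{1}\rangle_{L_2(\pi)}|\le\prod_i\|R_i\mathbf{1}\|_{L_1(\pi)}$ directly.
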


\begin{claim}\label{clm:claimcombo}
For all $k \ge 1$, vectors $u_1, \ldots, u_{k} \in \R^N$, $U_i = \diag(u_i)$ for all $i$, distributions $\pi$ over $[N]$
and matrices $T_1, \ldots, T_{k-1} \in \R^{N \times N}$,
\[
\left\|U_1 T_1 U_2 T_2 \cdots T_{k-1} U_k \mathbf{1} \right\|_{L_1(\pi)} 
\leq 
\|u_k\|_{L_\infty(\pi)}\prod_{i=1}^{k-1} \|u_i\|_{L_\infty(\pi)} \|T_i\|_{L_2(\pi)} \; .
\]
\end{claim}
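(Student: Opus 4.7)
The plan is to peel off the operators one at a time, starting from the outside, by chaining two elementary facts. The first is that for a diagonal matrix $U = \diag(u)$ one has $\|Uv\|_{L_2(\pi)} \leq \|u\|_{L_\infty(\pi)}\|v\|_{L_2(\pi)}$, which follows immediately from the definition of the $L_2(\pi)$ norm. The second is the defining property of the operator norm, $\|Tv\|_{L_2(\pi)} \leq \|T\|_{L_2(\pi) \to L_2(\pi)} \|v\|_{L_2(\pi)}$.

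First I would use Jensen's inequality (using that $\pi$ is a probability distribution) in the form $\|v\|_{L_1(\pi)} \leq \|v\|_{L_2(\pi)}$ to replace the outer $L_1(\pi)$ norm by an $L_2(\pi)$ norm. Then I would alternate: apply the diagonal bound to strip $U_1$, obtaining the factor $\|u_1\|_{L_\infty(\pi)}$; apply the operator-norm bound to strip $T_1$, obtaining $\|T_1\|_{L_2(\pi) \to L_2(\pi)}$; and repeat for $U_2, T_2, \ldots, U_{k-1}, T_{k-1}$. After these $2(k-1)$ steps only $\|U_k \mathbf{1}\|_{L_2(\pi)} = \|u_k\|_{L_2(\pi)}$ remains, and a final application of $\|u_k\|_{L_2(\pi)} \leq \|u_k\|_{L_\infty(\pi)}$ (again using that $\pi$ is a probability distribution) supplies the missing factor. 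Multiplying the accumulated constants yields exactly
\[
\|u_k\|_{L_\infty(\pi)} \prod_{i=1}^{k-1} \|u_i\|_{L_\infty(\pi)} \|T_i\|_{L_2(\pi) \to L_2(\pi)},
\]
which is the claim (interpreting $\|T_i\|_{L_2(\pi)}$ as the operator norm $\|T_i\|_{L_2(\pi) \to L_2(\pi)}$).

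There is really no obstacle here: the argument is a routine telescoping of Hölder-type inequalities, and the only subtlety is remembering to invoke the probabilistic structure of $\pi$ twice, once at the start to pass from $L_1(\pi)$ to $L_2(\pi)$ and once at the end to pass from $L_2(\pi)$ to $L_\infty(\pi)$ on $u_k$. A formal induction on $k$ could be written, with the base case $k=1$ reducing to $\|u_1\|_{L_1(\pi)} \leq \|u_1\|_{L_\infty(\pi)}$, but the direct telescoping is cleaner.
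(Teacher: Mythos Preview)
Your proposal is correct and follows essentially the same approach as the paper: apply Jensen's inequality to pass from $L_1(\pi)$ to $L_2(\pi)$, then peel off the $U_i$ and $T_i$ one at a time using $\|U_i\|_{L_2(\pi)\to L_2(\pi)} = \|u_i\|_{L_\infty(\pi)}$ and the definition of the operator norm. The only cosmetic difference is that the paper treats the last step as $\|U_k\|_{L_2(\pi)\to L_2(\pi)}\|\mathbf{1}\|_{L_2(\pi)} = \|u_k\|_{L_\infty(\pi)}$, whereas you compute $\|U_k\mathbf{1}\|_{L_2(\pi)} = \|u_k\|_{L_2(\pi)}$ first and then bound it by $\|u_k\|_{L_\infty(\pi)}$; these are equivalent.
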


\begin{proof}
By Jensen's inequality, the right-hand side is bounded above by
\[
\left\|U_1 T_1 U_2 T_2 \cdots T_{k-1} U_k \mathbf{1} \right\|_{L_2(\pi)} 
\]
and the claim follows by the definition of operator norm, and the fact that $\|U_i\|_{L_2(\pi) \rightarrow L_2(\pi)} = \|u_i\|_{L_{\infty}(\pi)}$.
\end{proof}


\begin{lemma}\label{lem:holderapplication}
Let $k \ge 1$ be an integer.
Let $S_k \subset \{0, 1\}^k$ be the subset of $\{0, 1\}^k$ of vectors $s$ with no two consecutive $0$s and so that $s_1, s_k = 1$. 
Let $\pi$ be a distribution over $[N]$, let $u_1, \ldots, u_{k+1} \in \R^{N}$ be $N$-dimensional vectors such that $u_i^t\pi = 0$ for all $i$, and let $U_i = \diag(u_i)$ for all $i$.
Finally, let $T_1, \ldots, T_k \in \R^{N \times N}$.
Then,
\begin{multline}\label{eq:splittingj}
\left|\left\langle \mathbf{1}, U_1 (T_1+E_{\pi}) U_2 (T_2+E_{\pi}) U_3\cdots U_k (T_k+E_{\pi}) U_{k+1} \mathbf{1} \right\rangle_{L_2(\pi)}\right|
\leq \\
\|u_1\|_{L_{\infty}(\pi)}\|u_2\|_{L_{\infty}(\pi)}\cdots\|u_{k+1}\|_{L_{\infty}(\pi)} \sum_{s \in S_{k}}\prod_{j: s_j = 1}\|T_j\|_{L_2(\pi) \rightarrow L_2(\pi)}\; .
\end{multline}
\end{lemma}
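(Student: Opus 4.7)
The plan is to expand $(T_1+E_\pi)(T_2+E_\pi)\cdots(T_k+E_\pi)$ into $2^k$ terms indexed by binary strings $s \in \{0,1\}^k$, where $s_j=1$ means the $j$th factor is $T_j$ and $s_j=0$ means it is $E_\pi$. For each $s$, the inner product becomes
\[
\langle \mathbf{1}, U_1 R_1 U_2 R_2 \cdots U_k R_k U_{k+1}\mathbf{1}\rangle_{L_2(\pi)},
\]
with $R_j = T_j$ or $R_j = E_\pi$ according to $s_j$. I would first argue that only $s \in S_k$ contribute. Since $E_\pi v = \mathbb{E}_\pi[v]\mathbf{1}$, a factor $U_i E_\pi$ on the far left produces the scalar $\mathbb{E}_\pi[u_1]=0$, handling $s_1 = 0$; symmetrically, $E_\pi U_{k+1}\mathbf{1} = \mathbb{E}_\pi[u_{k+1}]\mathbf{1}=0$ handles $s_k=0$; and any internal pattern $U_j E_\pi U_{j+1} E_\pi U_{j+2}$ applied to anything produces $\mathbb{E}_\pi[u_{j+1}]=0$, killing all strings with two consecutive zeros. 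So the sum reduces to $s \in S_k$.

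For each $s \in S_k$, I would then split the remaining product into blocks separated by the $E_\pi$ factors. Explicitly, if the positions of the zeros of $s$ are $j_1 < \cdots < j_\ell$, write the expression as $B_0 E_\pi B_1 E_\pi \cdots E_\pi B_\ell$, where each $B_m$ is a block of the form $U_{a_m} T_{a_m} U_{a_m+1} T_{a_m+1} \cdots T_{b_m-1} U_{b_m}$ involving only the $T_j$'s with $s_j=1$. Applying Claim~\ref{clm:jbetween} (with the $R_i$ of that claim set to our $B_m$'s) factors the inner product as $\prod_{m=0}^{\ell} \langle \mathbf{1}, B_m\mathbf{1}\rangle_{L_2(\pi)} \leq \prod_m \|B_m\mathbf{1}\|_{L_1(\pi)}$. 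Then Claim~\ref{clm:claimcombo} bounds each $\|B_m\mathbf{1}\|_{L_1(\pi)}$ by the product of the $L_\infty(\pi)$-norms of the $u_i$'s appearing in the block and the $L_2(\pi)\to L_2(\pi)$ operator norms of the $T_j$'s in the block.

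Multiplying over blocks, each $u_i$ for $i \in [k+1]$ appears in exactly one block and contributes a factor $\|u_i\|_{L_\infty(\pi)}$, while the $T_j$-factors that appear are precisely those with $s_j=1$. This yields
\[
\left|\langle \mathbf{1}, U_1 R_1 \cdots R_k U_{k+1}\mathbf{1}\rangle_{L_2(\pi)}\right|
\leq \prod_{i=1}^{k+1} \|u_i\|_{L_\infty(\pi)} \prod_{j:s_j=1} \|T_j\|_{L_2(\pi)\to L_2(\pi)},
\]
and summing over $s \in S_k$ and applying the triangle inequality to the full expansion gives the claimed bound.

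The main obstacle is not any single inequality but the bookkeeping: making sure the blockwise decomposition lines up the $U_i$'s and $T_j$'s correctly at the boundaries (so that the endpoints of each block are diagonal $U$-matrices, as required by Claim~\ref{clm:claimcombo}), and that every $u_i$ ends up counted exactly once. The constraints $s_1=s_k=1$ and the no-consecutive-zeros property of $S_k$ are exactly what ensures this block structure is well-defined, which is why the combinatorial vanishing argument in the first step and the algebraic factorization in the second step fit together cleanly.
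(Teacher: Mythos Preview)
Your proposal is correct and follows essentially the same approach as the paper's proof: expand into $2^k$ terms, eliminate the terms with $s\notin S_k$ via the vanishing arguments $E_\pi U_j E_\pi=0$, $E_\pi U_{k+1}\mathbf{1}=0$, and $\langle \mathbf{1},U_1 E_\pi v\rangle_{L_2(\pi)}=0$, then for each surviving $s$ apply Claim~\ref{clm:jbetween} to factor at the $E_\pi$'s and Claim~\ref{clm:claimcombo} to bound each block. The bookkeeping concern you flag is exactly right, and the paper handles it in the same way you outline.
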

\begin{proof}
For $j=1,\ldots,k$, let $T_{j, 0} = E_{\pi}$ and $T_{j, 1} = T_j$. 
Then using the triangle inequality, the left-hand side of~\eqref{eq:splittingj} is at most
\begin{align}
\sum_{s \in \{0, 1\}^k} \left|\left\langle \mathbf{1}, \left(\prod_{j=1}^k U_jT_{j, s_j}\right) U_{k+1} \mathbf{1}\right\rangle_{L_2(\pi)}\right|
=
\sum_{s \in S_k} \left|\left\langle \mathbf{1}, \left(\prod_{j=1}^k U_jT_{j, s_j}\right) U_{k+1} \mathbf{1}\right\rangle_{L_2(\pi)}\right|, \label{eq:mainclaiminmono}
\end{align} 
since the terms corresponding to vectors $s$ with two consecutive zeros or with $s_k = 0$ are equal to $0$
because in these cases the term $E_{\pi}U_jE_{\pi}=0$ (or $E_{\pi}U_{k+1}\mathbf{1}=0$) appears.
Additionally, terms corresponding to vectors $s$ with $s_1 = 0$ are equal to $0$, as $\langle \mathbf{1}, U_1E_{\pi}v\rangle_{L_2(\pi)} = 0$ for all $v \in \R^{N}$.

Fix an $s \in S_k$, and let $r_1, r_2, \ldots, r_\ell$ be the indices of $s$ that are $0$.  
By Claim~\ref{clm:jbetween}, the term corresponding to $s$ in Eq.~\eqref{eq:mainclaiminmono}
is at most
\begin{multline*}
\|U_1T_{1}U T_{2}\cdots T_{r_1-1} U_{r_1} \mathbf{1} \|_{L_1(\pi)} \cdot
\|U_{r_1+1}T_{r_1+1}U_{r_1+2} T_{r_1+2}\cdots T_{r_2-1} U_{r_2} \mathbf{1} \|_{L_1(\pi)}
\cdots \\
\|U_{r_{\ell}+1}T_{r_{\ell}+1}U_{r_{\ell}+2} T_{r_{\ell}+2}\cdots T_{k} U_{k+1} \mathbf{1} \|_{L_1(\pi)}  \; .
\end{multline*}
The claim now follows by applying Claim~\ref{clm:claimcombo}.
\end{proof}

\end{document}